\newtheorem{theorem}{Theorem}[section]
\newtheorem{proposition}[theorem]{Proposition}
\newtheorem{lemma}[theorem]{Lemma}
\newtheorem{example}[theorem]{Example}
\newtheorem{corollary}[theorem]{Corollary}
\newtheorem{condition}[theorem]{Condition}
\newtheorem{remark}[theorem]{Remark}
\newcommand{\op}{\textnormal{op}}
\newcommand{\C}{\mathbb{C}}
\newcommand{\X}{\mathbb{X}}
\newcommand{\V}{\mathcal{V}}
\newcommand{\Ke}{\textnormal{Ker}}
\newcommand{\ke}{\textnormal{ker}}
\newcommand{\Pt}{\mathbf{Pt}}
\newcommand{\Set}{\mathbf{Set}}
\DeclareMathOperator{\sq}{\fbox{\phantom{o}}}
\DeclareMathOperator{\sqz}{\fbox{\phantom{o}}^0}
\begin{document}
\title{A note on the Huq-commutativity of normal monomorphisms}
\author{James Richard Andrew Gray and Tamar Janelidze-Gray}
\maketitle
\abstract{
We give an alternative criteria for when a pair of Bourn-normal
monomorphisms Huq-commute in a unital category.
We use this to prove that in a unital
category, in which a morphism is a monomorphism if and only if its kernel is 
zero morphism, a pair of Bourn-normal monomorphisms with the same codomain
Huq-commute as soon as they have trivial pullback. As corollaries we
show that several facts known only in the protomodular context are in
fact true in more general contexts.
}
\section{Introduction}
It is well known and easy to prove that if $K$ and $L$ are normal subgroups
of a group $G$ and $K\cap L = 0$, then each element in $K$ commutes with each
element of $L$. This fact has several known generalizations to categories. 

An immediate generalization is obtained in the context where there is a
suitable notion of a commutator $[-,-]$  defined for normal subobjects, (which is
commutative and) satisfying
the property that if $K,L$ are normal subobjects then $[K,L]\leq K$.
In this context, if $K$ and $L$ are normal then  $[K,L]\leq K\wedge L$.
Therefore if $K$ and $L$ are trivial it immediately follows that $K\wedge L$
is trivial, which implies that $K$ and $L$ commute.
This is the case for the Huq commutator in a normal unital category.

An alternative generalization was obtained by D. Bourn
(Theorem 11 \cite{BOURN:2000a}) in the context of pointed
protomodular category \cite{BOURN:1991} (also introduced by D. Bourn):
he proved that if the meet of $k$
and $l$ is $0$, and
if $k$ and $l$ are Bourn-normal with the same codomain, then $k$ and $l$
Huq-commute \cite{HUQ:1968}. Recall that in a pointed finitely complete 
category,
a Bourn-normal monomorphism is essentially the zero class of an internal
equivalence relation.

We show (Corollary \ref{corollary:_meet_trivial_commutes}) that this latter
fact is true in the wider context of a unital category
\cite{BORCEUX_BOURN:2004} (introduced by F. Borceux and D. Bourn) satisfying
Condition \ref{zero_det}, which simply requires a morphism to be a monomorphism
as soon as it's kernel is zero. This context
is sufficiently wide so that it includes every normal unital category 
which implies that the former result also becomes a special case.
In doing so we
produce an alternative criteria (Theorem
\ref{theorem:char_of_huq_commutes_for_bourn_normal})
for when a pair of Bourn-normal monomorphisms commute in a unital category,
which closely resembles Proposition 2.6.13 of \cite{BORCEUX_BOURN:2004}.

We briefly study Condition \ref{zero_det}, and in particular: (i) we explain that it is a 
special case of a known condition (see Remark
\ref{remark:zero_dat_known}) and that it
together with regularity is easily equivalent to normality; (ii) we give
examples of categories satisfying it as well as our other conditions
(some of which are not normal categories); (iii) we characterize it in terms of
the fibration of points (Proposition \ref{characterization_of_zero_det}).
Using in part this characterization, we show that in a pointed
Mal'tsev category \cite{CARBONI_LAMBECK_PEDICCHIO:1991}
satisfying Condition \ref{zero_det}, the join of Bourn-normal
monomorphisms, with the same codomain and trivial meet, exists and is
Bourn-normal.
In addition, we show
that the characterization of abelian objects, via the normality of their
diagonal in the product, lifts from pointed protomodular categories to 
strongly unital categories \cite{BORCEUX_BOURN:2004} satisfying Condition \ref{zero_det}.

\section{Preliminaries}
In this section we recall the necessary definitions and preliminary facts, and introduce the
notation we will use. 

For a pointed category $\C$ we write $0$ for the
zero object as well as for each zero morphism between each pair of objects.
For objects $X$ and $Y$ we will often write $\pi_1 :X\times Y \to X$ and
$\pi_2 :X\times Y\to Y$
for the first and second product projections (when they exists), and for
morphisms $f:W\to X$ and $g:W\to Y$ we will write
$\langle f,g\rangle:W\to X\times Y$ for the unique morphism with
$\pi_1\langle f,g\rangle = f$ and $\pi_2\langle f,g\rangle=g$.
Recall that a category $\C$ is unital if $\C$ it is pointed, finitely complete,
and for each pair of objects $X$ and $Y$ the unique morphisms
$\langle 1,0\rangle: X\to X\times Y$ and $\langle 0,1\rangle : Y\to X\times Y$ are
jointly strongly epimorphic.

A pair of morphisms $f:X\to A$ and $g:Y\to A$ in a unital category $\C$
are said to Huq-commute, if
there exists a unique morphism $\varphi : X\times Y\to A$ making the diagram
\begin{equation}\label{}
\vcenter{
\xymatrix{
X
\ar[r]^-{\langle 1,0\rangle}
\ar[dr]_{f}
&
X\times Y
\ar[d]^{\varphi}
&
Y
\ar[l]_-{\langle 0,1\rangle}
\ar[dl]^{g}
\\
&
A
&
}
}
\end{equation}
commute. The morphism $\varphi$ is called the cooperator of $f$ and $g$. A
morphism $f:X\to A$ is called a central monomorphism if it is a monomorphism
and it Huq-commutes with $1_A$.

We will also need the following lemmas (see e.g \cite{BORCEUX_BOURN:2004} and the references there):
\begin{lemma}
\label{lemma:mono_cancels}
For $u:X'\to X$, $v:Y'\to Y$, $f:X\to A$ and $g:Y\to A$ morphisms in $\C$ and
$m:A\to B$ and monomorphism.
\begin{enumerate}[(i)]
\item The morphisms $f$ and $g$ Huq-commute if and only if the morphisms $g$
and $f$ Huq-commute;
\item  The morphisms $mf$ and $mg$ Huq-commute if and only if the morphisms
$f$ and $g$ Huq-commute;
\item  If the morphisms $f$ and $g$ Huq-commute, then so do the morphisms
$fu$ and $gv$.
\end{enumerate}
\end{lemma}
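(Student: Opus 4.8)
The plan is to verify each part by producing the relevant cooperator explicitly. Note first that, in a unital category, the pair $\langle 1,0\rangle,\langle 0,1\rangle$ being jointly (strongly) epimorphic makes any cooperator unique; so for a given pair of morphisms, Huq-commuting amounts simply to the \emph{existence} of a $\varphi$ as in the displayed diagram, and it suffices to exhibit one.

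For (i), I would use the symmetry isomorphism $\tau=\langle \pi_2,\pi_1\rangle : Y\times X\to X\times Y$, which satisfies $\tau\langle 1,0\rangle=\langle 0,1\rangle$ and $\tau\langle 0,1\rangle=\langle 1,0\rangle$: if $\varphi$ cooperates $f$ and $g$, then $\varphi\tau$ cooperates $g$ and $f$, and symmetrically. For (iii), writing $u\times v=\langle u\pi_1,v\pi_2\rangle : X'\times Y'\to X\times Y$, a one-line computation with projections gives $(u\times v)\langle 1,0\rangle=\langle 1,0\rangle u$ and $(u\times v)\langle 0,1\rangle=\langle 0,1\rangle v$, so $\varphi(u\times v)$ cooperates $fu$ and $gv$ whenever $\varphi$ cooperates $f$ and $g$. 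The ``if'' half of (ii) is even easier: if $\varphi$ cooperates $f$ and $g$, then $m\varphi$ cooperates $mf$ and $mg$, with no hypothesis on $m$ needed.

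The one step with genuine content is the ``only if'' half of (ii). Here, given a cooperator $\psi : X\times Y\to B$ for $mf$ and $mg$, I must produce $\varphi : X\times Y\to A$ with $m\varphi=\psi$; this $\varphi$ will then cooperate $f$ and $g$ by cancelling the monomorphism $m$ from $m\varphi\langle 1,0\rangle=mf$ and $m\varphi\langle 0,1\rangle=mg$. The idea is to form the pullback of $m$ along $\psi$, giving a monomorphism $n : P\to X\times Y$ (a pullback of $m$) and a morphism $p : P\to A$ with $\psi n=mp$. From $\psi\langle 1,0\rangle=mf$ and $\psi\langle 0,1\rangle=mg$, the universal property of the pullback produces factorizations of $\langle 1,0\rangle$ and $\langle 0,1\rangle$ through $n$; since these two morphisms are jointly strongly epimorphic and both factor through the monomorphism $n$, the morphism $n$ is an isomorphism, and $\varphi=p\,n^{-1}$ does the job. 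I expect this factorization --- the only place where joint (strong) epimorphy enters --- to be the main obstacle; the remainder is a routine diagram chase with product projections.
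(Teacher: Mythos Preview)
Your argument is correct. Note, however, that the paper does not supply its own proof of this lemma: it is stated as a preliminary fact with a reference to Borceux--Bourn~\cite{BORCEUX_BOURN:2004}, so there is no in-paper proof to compare against. What you have written is precisely the standard argument one finds there. Parts (i) and (iii) follow immediately from the symmetry isomorphism and functoriality of the product, and the ``if'' half of (ii) is trivial. For the ``only if'' half of (ii), your pullback argument is exactly the right one: pulling $m$ back along the given cooperator $\psi$ yields a monomorphism $n:P\to X\times Y$ through which both $\langle 1,0\rangle$ and $\langle 0,1\rangle$ factor, and the defining property of a jointly strongly epimorphic pair forces $n$ to be invertible, producing the desired lift $\varphi=p\,n^{-1}$. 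Nothing needs adjusting.
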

\begin{lemma}
\label{lemma:product_decomposes}
For $f:X\to A$, $g:Y\to A$, $f':X'\to A'$ and $g':Y'\to A'$ in $\C$,
the morphisms $f\times f'$ and $g\times g'$ Huq-commute if and only if
both the morphisms $f$ and $g$, and the morphisms $f'$ and $g'$ Huq-commute.
\end{lemma}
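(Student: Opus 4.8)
The plan is to prove the two implications separately. Since $\C$ is unital, $\langle 1,0\rangle$ and $\langle 0,1\rangle$ are jointly strongly epimorphic, in particular jointly epimorphic, so a cooperator is automatically unique once it exists; hence throughout I only need to produce, or detect, the relevant cooperators.

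For the \emph{if} direction, suppose $\varphi\colon X\times Y\to A$ is the cooperator of $f$ and $g$ and $\varphi'\colon X'\times Y'\to A'$ is the cooperator of $f'$ and $g'$. I would introduce the canonical interchange isomorphism $\tau\colon (X\times X')\times(Y\times Y')\to(X\times Y)\times(X'\times Y')$ (available since $\C$ is finitely complete) and claim that $\psi:=(\varphi\times\varphi')\,\tau$ is a cooperator of $f\times f'$ and $g\times g'$. The key point is the identity $\tau\,\langle 1,0\rangle=\langle 1,0\rangle\times\langle 1,0\rangle\colon X\times X'\to(X\times Y)\times(X'\times Y')$, together with its symmetric counterpart $\tau\,\langle 0,1\rangle=\langle 0,1\rangle\times\langle 0,1\rangle$; each is verified by composing with the four projections out of $(X\times Y)\times(X'\times Y')$. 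Granting these, $\psi\,\langle 1,0\rangle=(\varphi\,\langle 1,0\rangle)\times(\varphi'\,\langle 1,0\rangle)=f\times f'$ and similarly $\psi\,\langle 0,1\rangle=g\times g'$, which is exactly what is needed.

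For the \emph{only if} direction, suppose $f\times f'$ and $g\times g'$ Huq-commute. Applying Lemma \ref{lemma:mono_cancels}(iii) with $u=\langle 1,0\rangle\colon X\to X\times X'$ and $v=\langle 1,0\rangle\colon Y\to Y\times Y'$ shows that $(f\times f')\,\langle 1,0\rangle$ and $(g\times g')\,\langle 1,0\rangle$ Huq-commute. A short computation with projections identifies these with $\langle 1,0\rangle\,f$ and $\langle 1,0\rangle\,g$, where now $\langle 1,0\rangle\colon A\to A\times A'$ is a split monomorphism, hence a monomorphism; so Lemma \ref{lemma:mono_cancels}(ii) gives that $f$ and $g$ Huq-commute. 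Repeating the argument with $\langle 0,1\rangle$ in place of $\langle 1,0\rangle$ shows that $f'$ and $g'$ Huq-commute.

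I expect the only step requiring real care to be the bookkeeping with $\tau$ in the \emph{if} direction, namely checking $\tau\,\langle 1,0\rangle=\langle 1,0\rangle\times\langle 1,0\rangle$; this is routine once the projections are written out. Everything else reduces to direct invocations of Lemma \ref{lemma:mono_cancels} and the automatic uniqueness of cooperators in a unital category.
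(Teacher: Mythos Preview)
Your argument is correct. Note, however, that the paper does not actually supply its own proof of this lemma: it is listed among the preliminary facts with the blanket citation ``see e.g.\ \cite{BORCEUX_BOURN:2004} and the references there'' and left unproved. What you have written is essentially the standard argument (construct the cooperator of the product via the interchange isomorphism for the \emph{if} direction; precompose with $\langle 1,0\rangle$ or $\langle 0,1\rangle$ and cancel the split monomorphism into $A\times A'$ for the \emph{only if} direction), and it is exactly the kind of proof one would find in that reference.
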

\begin{lemma}
\label{lemma:product_decomposes_2}
For $f:X\to A$, $g:Y\to A$, $f':X\to A'$ and $g':Y\to A'$ in $\C$,
the morphisms $\langle f, f'\rangle$ and $\langle g, g'\rangle$ Huq-commute if and only if
$f$ and $g$ Huq-commute, and $f'$ and $g'$ Huq-commute.
\end{lemma}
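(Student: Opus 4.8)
The plan is to prove both implications by direct manipulation of cooperators, exploiting the fact that in a unital category the morphisms $\langle 1,0\rangle$ and $\langle 0,1\rangle$ into a product are jointly epimorphic (indeed strongly so), so that a cooperator, when it exists, is automatically unique. Thus in each case it suffices to exhibit \emph{some} morphism into $A\times A'$ (respectively into $A$ and into $A'$) making the defining triangles commute.

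For the forward implication, suppose $\langle f,f'\rangle$ and $\langle g,g'\rangle$ Huq-commute, with cooperator $\varphi\colon X\times Y\to A\times A'$, so that $\varphi\langle 1,0\rangle=\langle f,f'\rangle$ and $\varphi\langle 0,1\rangle=\langle g,g'\rangle$. Composing with $\pi_1$ and using $\pi_1\langle f,f'\rangle=f$ and $\pi_1\langle g,g'\rangle=g$ shows that $\pi_1\varphi$ is a cooperator of $f$ and $g$; symmetrically, $\pi_2\varphi$ is a cooperator of $f'$ and $g'$. Hence both pairs Huq-commute.

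For the converse, let $\psi\colon X\times Y\to A$ be the cooperator of $f$ and $g$, and $\psi'\colon X\times Y\to A'$ that of $f'$ and $g'$. Then $\langle\psi,\psi'\rangle\colon X\times Y\to A\times A'$ satisfies $\langle\psi,\psi'\rangle\langle 1,0\rangle=\langle\psi\langle 1,0\rangle,\psi'\langle 1,0\rangle\rangle=\langle f,f'\rangle$ and, likewise, $\langle\psi,\psi'\rangle\langle 0,1\rangle=\langle g,g'\rangle$, so it is a cooperator of $\langle f,f'\rangle$ and $\langle g,g'\rangle$. (Alternatively, this direction follows from Lemma~\ref{lemma:product_decomposes} applied to $f\times f'$ and $g\times g'$ together with Lemma~\ref{lemma:mono_cancels}(iii), since $\langle f,f'\rangle=(f\times f')\langle 1_X,1_X\rangle$ and $\langle g,g'\rangle=(g\times g')\langle 1_Y,1_Y\rangle$.)

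I do not expect a genuine obstacle here: the statement is a formal consequence of the universal property of the product together with the joint epimorphy of $\langle 1,0\rangle$ and $\langle 0,1\rangle$. The only minor points needing care are the routine identities such as $\pi_1\langle f,f'\rangle=f$, and the remark that uniqueness of the cooperator comes for free in the unital setting, so that it never has to be verified by hand.
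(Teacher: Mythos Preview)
Your argument is correct. The paper does not actually supply its own proof of this lemma: it is listed among preliminary facts with a reference to \cite{BORCEUX_BOURN:2004}, so there is nothing to compare against directly. Your direct construction---projecting a given cooperator $\varphi$ along $\pi_1,\pi_2$ for the forward direction, and pairing cooperators $\langle\psi,\psi'\rangle$ for the converse---is exactly the standard proof one finds in that reference, and your observation that uniqueness is automatic from the joint (strong) epimorphy of $\langle 1,0\rangle$ and $\langle 0,1\rangle$ is the right justification. The parenthetical alternative via Lemmas~\ref{lemma:product_decomposes} and~\ref{lemma:mono_cancels}(iii) is also valid and is a nice remark, though note it only gives the converse implication; the forward direction still needs the projection argument (or, dually, Lemma~\ref{lemma:mono_cancels}(ii) applied to the monomorphisms $\langle 1,0\rangle:A\to A\times A'$ and $\langle 0,1\rangle:A'\to A\times A'$).
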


\section{The results} Throughout this section we assume that $\C$ is a unital category.
Let $k:X\to A$ and $l:Y\to A$ be monomorphisms, let
$r_1,r_2:R\to A$ and $s_1,s_2:S\to A$ be equivalence relations
and let $\kappa$ and $\lambda$ be morphisms such that the diagrams
\begin{equation}\label{Def k and l}\vcenter{
\xymatrix{
X
\ar[r]^{\kappa}
\ar[d]_{k}
&
R
\ar[d]^{\langle r_1,r_2\rangle}
&
Y
\ar[r]^{\lambda}
\ar[d]_{l}
&
S
\ar[d]^{\langle s_1,s_2\rangle}
\\
A
\ar[r]_-{\langle 1,0\rangle}
&
A\times A
&
A
\ar[r]_-{\langle 0,1\rangle}
&
A\times A
} }
\end{equation}
are pullbacks. Note that in pointed context this amounts to saying $k$ and $l$ are
Bourn-normal. In particular, this includes the case when $k$ and $l$ are the
kernels of some morphisms $f$ and $g$: in this case, $r_1,r_2$ and $s_1,s_2$
can be constructed as the kernel pairs of $f$ and $g$ respectively, and $\kappa$
and $\lambda$ are the unique morphisms with $r_1 \kappa = k$, $r_2\kappa=0$,
$s_1\lambda = 0$, and $s_2\lambda=0$.

We will need the relation $R\sqz S$, which is a pointed counter part to $R\sq S$
introduced by A. Carboni, M.C. Pedicchio and N. Pirovano in \cite{CARBONI_PEDICCHIO_PIRAVANO:1992}. In the context of pointed sets has
elements
\[
\{(x,a,y)\in X\times A\times Y\,|\,(k(x),a)\in S \text{ and } (a,l(y))\in R\}.
\]
Note that an element $(x,a,y)$ in $R\sqz S$ can, after identifying $k(x)$ and
$x$, and $l(y)$ and $y$, be displayed as follows
\[
\xymatrix@R=5ex@C=5ex{
x
\ar[r]^{R}
\ar[d]_{S}
&
0
\ar[d]^{S}
\\
a
\ar[r]_{R}
&
y.
}
\]
Categorically this relation can be built via the pullbacks 
\begin{equation}\vcenter{\label{Def Composite of relations}
\xymatrix{
&
&
P
\ar[dl]_{p_1}
\ar[dr]^{p_2}
&
&
\\
&
S\ar[dl]_{s_1}
\ar[dr]^{s_2}
&
&
R
\ar[dl]_{r_1}
\ar[dr]^{r_2}
&
\\
A
&
&
A
&
&
A
}}
\end{equation}
\begin{equation}\label{Def theta}\vcenter{
\xymatrix{
R\sqz S
\ar[r]^{\theta}
\ar[d]_{\psi}
&
X\times Y
\ar[d]^{k\times l}
\\
P
\ar[r]_-{\langle s_1p_1,r_2p_2\rangle}
&
A\times A
}}
\end{equation}
or directly as the limit of the outer arrows of what is easily seen to be a limiting cone
\begin{equation}
\label{limit_defining_box_zero}
\vcenter{
\xymatrix@C=6ex@R=7ex{
A
&
X
\ar[l]_{k}
&
\\
S
\ar[u]^{s_1}
\ar[d]_{s_2}
&
R\sqz S
\ar[u]^{\pi_1\theta}
\ar[l]_{p_1\psi}
\ar[r]^{\pi_2\theta}
\ar[d]^{p_2\psi}
&
Y
\ar[d]^{l}
\\
A
&
R
\ar[l]_{r_1}
\ar[r]^{r_2}
&
A.
}
}
\end{equation}
Let $\alpha:X\to R\sqz S$ and $\beta: Y\to R\sqz S$ be the unique cone morphisms
induced by the cones
\begin{equation}\label{Def alpha and beta}\vcenter{
\xymatrix@C=9ex@R=7ex{
A
&
X
\ar[l]_{k}
&
\\
S
\ar[u]^{s_1}
\ar[d]_{s_2}
&
X
\ar[u]^{1_X}
\ar[l]_{e_S k}
\ar[r]^{0}
\ar[d]^{\kappa}
&
Y
\ar[d]^{l}
\\
A
&
R
\ar[l]_{r_1}
\ar[r]^{r_2}
&
A
}
\xymatrix@C=9ex@R=7ex{
A
&
X
\ar[l]_{k}
&
\\
S
\ar[u]^{s_1}
\ar[d]_{s_2}
&
Y
\ar[u]^{0}
\ar[l]_{\kappa}
\ar[r]^{1_Y}
\ar[d]^{e_R l}
&
Y
\ar[d]^{l}
\\
A
&
R
\ar[l]_{r_1}
\ar[r]^{r_2}
&
A.
} }
\end{equation}

Note that, in particular, it follows that $\alpha$ and $\beta$ are morphisms making the two triangles in the diagram
\begin{equation*}
\label{theta_def}
\vcenter{
\xymatrix{
	& R\sqz S \ar[d]^{\theta}\\
X \ar[r]_-{\langle 1,0\rangle} \ar[ru]^-{\alpha} & X\times Y & Y \ar[l]^-{\langle 0,1\rangle}	\ar[lu]_{\beta},
}
}
\end{equation*}
where $\theta$ is defined as in Diagram \ref{Def theta},
are commutative. Since $\C$ is a unital catgory, this means (see e.g. Theorem 1.2.12 of \cite{BORCEUX_BOURN:2004}):
\begin{proposition}
\label{proposition:theta_strong}
The morphism $\theta$ in \eqref{theta_def} is a strong epimorphism. \qed
\end{proposition}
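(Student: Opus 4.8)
The plan is to derive this directly from the defining property of a unital category together with the two triangle identities $\theta\alpha=\langle 1,0\rangle$ and $\theta\beta=\langle 0,1\rangle$ recorded immediately above the statement. Recall that in a unital category the pair $\big(\langle 1,0\rangle:X\to X\times Y,\ \langle 0,1\rangle:Y\to X\times Y\big)$ is jointly strongly epimorphic, in the strong sense: for every monomorphism $m:E\to B$, every pair of morphisms $a_1:X\to E$, $a_2:Y\to E$ and every $b:X\times Y\to B$ with $m a_1=b\langle 1,0\rangle$ and $m a_2=b\langle 0,1\rangle$, there is a \emph{unique} $t:X\times Y\to E$ with $t\langle 1,0\rangle=a_1$, $t\langle 0,1\rangle=a_2$ and $m t=b$.

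First I would isolate the general categorical fact that a morphism through which a jointly strongly epimorphic pair factors is itself a strong epimorphism. Concretely, suppose $e_i=g\alpha_i$ for $i=1,2$ with $(e_1,e_2)$ jointly strongly epimorphic, and consider a commutative square $m u=v g$ with $m$ a monomorphism. Precomposing with the $\alpha_i$ gives $m(u\alpha_i)=v e_i$, so the joint strong epimorphy of $(e_1,e_2)$ produces a unique $t$ with $t e_i=u\alpha_i$ and $m t=v$; since $m$ is monic and $m(t g)=v g=m u$, also $t g=u$, so $t$ is the required diagonal, and uniqueness of the diagonal is inherited from the uniqueness of $t$ (any competitor $t'$ satisfies $t'e_i=t'g\alpha_i=u\alpha_i$ and $mt'=v$).

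Finally I would apply this with $g=\theta$, $(e_1,e_2)=\big(\langle 1,0\rangle,\langle 0,1\rangle\big)$ and $(\alpha_1,\alpha_2)=(\alpha,\beta)$, the relevant factorizations being precisely the triangle identities coming from the cones \eqref{Def alpha and beta} that define $\alpha$ and $\beta$. This is, in effect, what Theorem 1.2.12 of \cite{BORCEUX_BOURN:2004} packages. I do not foresee a genuine obstacle here; the only point deserving attention is to use the lifting formulation of ``jointly strongly epimorphic'' rather than the weaker extremal one, so that both existence and uniqueness of the diagonal against an arbitrary monomorphism are available.
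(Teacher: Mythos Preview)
Your proposal is correct and follows exactly the route the paper takes: the paper simply observes that $\theta\alpha=\langle 1,0\rangle$ and $\theta\beta=\langle 0,1\rangle$ and then invokes Theorem~1.2.12 of \cite{BORCEUX_BOURN:2004}, while you additionally spell out the elementary diagonal-lifting argument that this citation packages. Your caution about using the orthogonality (lifting) formulation rather than the extremal one is harmless here, since $\C$ is finitely complete and the two notions coincide.
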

Using in part the previous fact, we are now ready to state and prove our
alternative criteria for when a pair of Bourn-normal monomorphisms commute.
\begin{theorem}
\label{theorem:char_of_huq_commutes_for_bourn_normal}
The following conditions are equivalent:
\begin{enumerate}[(a)]
\item $k:X\to A$ and $l:Y\to A$, as defined in \eqref{Def k and l}, Huq-commute;
\item $\alpha:X\to R\sqz S$ and $\beta:Y\to R\sqz S$, as defined in \eqref{Def alpha and beta}, Huq-commute;
\item $\theta :R\sqz S\to X\times Y$ is a split epimorphism of cospans with domain
$(R\sqz S,\alpha,\beta)$ and $(X\times Y,\langle 1,0\rangle,\langle 0,1\rangle)$.
\end{enumerate}
\end{theorem}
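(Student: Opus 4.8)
The plan is to establish the two equivalences $(b)\Leftrightarrow(c)$ and $(a)\Leftrightarrow(b)$ separately; only the implication $(a)\Rightarrow(b)$ needs genuine work, the rest following formally from the unital axiom. For $(b)\Leftrightarrow(c)$: since $\langle 1,0\rangle$ and $\langle 0,1\rangle$ are jointly epimorphic, a cooperator of $\alpha$ and $\beta$ is simply a morphism $\varphi\colon X\times Y\to R\sqz S$ with $\varphi\langle 1,0\rangle=\alpha$ and $\varphi\langle 0,1\rangle=\beta$ (such a $\varphi$, if it exists, is automatically unique), and any such $\varphi$ satisfies $\theta\varphi=1_{X\times Y}$ because $\theta\varphi\langle 1,0\rangle=\theta\alpha=\langle 1,0\rangle$ and $\theta\varphi\langle 0,1\rangle=\theta\beta=\langle 0,1\rangle$; hence a cooperator of $\alpha$ and $\beta$ is exactly a section of $\theta$ in the category of cospans under $(X,Y)$, which is $(c)$. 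For $(b)\Rightarrow(a)$: write $\nu\colon R\sqz S\to A$ for the leg of the limiting cone \eqref{limit_defining_box_zero} into the bottom-left copy of $A$, so $\nu=s_2p_1\psi=r_1p_2\psi$; reading off the cones \eqref{Def alpha and beta} together with the reflexivity identities $s_2e_S=1_A$, $r_1e_R=1_A$ gives $\nu\alpha=k$ and $\nu\beta=l$, so that if $\varphi$ cooperates $\alpha$ and $\beta$ then $\nu\varphi$ cooperates $k$ and $l$.

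It remains to prove $(a)\Rightarrow(b)$. Given a cooperator $\varphi'\colon X\times Y\to A$ of $k$ and $l$, the aim is to construct a cone over the diagram \eqref{limit_defining_box_zero} with vertex $X\times Y$, whose legs to $X$ and $Y$ are $\pi_1$ and $\pi_2$; the induced comparison morphism $\varphi\colon X\times Y\to R\sqz S$ will then, upon composition with the limit legs and comparison with \eqref{Def alpha and beta}, be seen to satisfy $\varphi\langle 1,0\rangle=\alpha$ and $\varphi\langle 0,1\rangle=\beta$, giving $(b)$. The legs of the cone to $S$ and $R$ must be factorizations $w$ of $\langle k\pi_1,\varphi'\rangle\colon X\times Y\to A\times A$ through the monomorphism $\langle s_1,s_2\rangle$, and $z$ of $\langle \varphi',l\pi_2\rangle\colon X\times Y\to A\times A$ through $\langle r_1,r_2\rangle$. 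Granting these, the compatibility equations of a cone over \eqref{limit_defining_box_zero} reduce to $s_1w=k\pi_1$, $r_2z=l\pi_2$ and $s_2w=\varphi'=r_1z$, all true by construction, while the identities $w\langle 1,0\rangle=e_Sk$, $w\langle 0,1\rangle=\lambda$, $z\langle 1,0\rangle=\kappa$, $z\langle 0,1\rangle=e_Rl$ follow by cancelling the monomorphisms $\langle s_1,s_2\rangle$ and $\langle r_1,r_2\rangle$.

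The heart of the argument is the existence of $w$ and $z$, and this is the one place where unitality enters. Restricting $\langle k\pi_1,\varphi'\rangle$ along $\langle 1,0\rangle$ gives $\langle k,k\rangle$, which factors through $\langle s_1,s_2\rangle$ via $e_Sk$, and restricting along $\langle 0,1\rangle$ gives $\langle 0,l\rangle$, which factors through $\langle s_1,s_2\rangle$ via $\lambda$ (this is the content of the right-hand pullback in \eqref{Def k and l}); since $\langle 1,0\rangle$ and $\langle 0,1\rangle$ are jointly strongly epimorphic, a morphism out of $X\times Y$ both of whose restrictions along $\langle 1,0\rangle$ and $\langle 0,1\rangle$ factor through a fixed monomorphism into the codomain must itself factor through that monomorphism --- pull the monomorphism back along the morphism and observe that the resulting monomorphism into $X\times Y$, through which both $\langle 1,0\rangle$ and $\langle 0,1\rangle$ factor, is an isomorphism --- and this produces $w$; symmetrically, $\langle \varphi',l\pi_2\rangle$ restricts to $\langle k,0\rangle$ and $\langle l,l\rangle$, which factor through $\langle r_1,r_2\rangle$ via $\kappa$ and $e_Rl$, producing $z$. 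I expect the only genuine bookkeeping to be the final check that the comparison morphism $\varphi$ restricts to $\alpha$ and to $\beta$ exactly, which amounts to comparing cones through \eqref{Def alpha and beta}; everything else is formal.
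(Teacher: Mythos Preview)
Your proof is correct, but the route you take for $(a)\Leftrightarrow(b)$ differs from the paper's. The paper handles both directions at once by exhibiting a monomorphism $m=\langle \pi_1\theta,\,s_2p_1\psi,\,\pi_2\theta\rangle\colon R\sqz S\to X\times A\times Y$, computing $m\alpha=\langle 1,k,0\rangle$ and $m\beta=\langle 0,l,1\rangle$, and then invoking Lemmas~\ref{lemma:mono_cancels} and~\ref{lemma:product_decomposes_2} to reduce the question to whether $k$ and $l$ Huq-commute componentwise. Your argument instead treats the two directions asymmetrically: for $(b)\Rightarrow(a)$ you post-compose the cooperator with the single leg $\nu=s_2p_1\psi$ (this is just the middle component of the paper's $m$), and for $(a)\Rightarrow(b)$ you build the section of $\theta$ directly as a cone morphism into the limit \eqref{limit_defining_box_zero}, using the joint-strong-epimorphicity of $\langle 1,0\rangle,\langle 0,1\rangle$ to obtain the required factorizations through $\langle s_1,s_2\rangle$ and $\langle r_1,r_2\rangle$. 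The paper's approach is shorter and packages everything into the embedding $m$; yours is more explicit and has the virtue of actually exhibiting the cooperator of $\alpha$ and $\beta$ (equivalently, the section in $(c)$) as the comparison map of a concrete cone, which may be useful downstream. Your treatment of $(b)\Leftrightarrow(c)$ is essentially identical to the paper's.
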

\begin{proof}
Let $m : R\sqz S \to X\times A\times Y$ be the morphism defined by
$m=\langle \pi_1\theta,s_2p_1\psi,\pi_2 \theta \rangle$. An easy calculation
shows that $m$ is a monomorphism. Noting that $m\alpha = \langle 1,k,0\rangle$
and $m\beta = \langle 0,l,1\rangle$, it follows from
Lemma
\ref{lemma:mono_cancels}
that  $\alpha$ and $\beta$ Huq-commute
if and only if $\langle 1,k,0\rangle$ and $\langle 0,l,1\rangle$ Huq-commute.
However, by Lemma
\ref{lemma:product_decomposes_2}
this latter condition is equivalent to requiring $k$ and $l$
to Huq-commute. This proves $(a)\Leftrightarrow (b)$.
To prove that $(b)\Rightarrow (c)$ we note that (b)
is equivalent to requiring that there is a morphism 
$\sigma : X\times Y \to R\sqz S$ making
the upper part of the diagram
\begin{equation}
\label{diag:char_of_huq_commutes_for_bourn_normal}
\vcenter{
\xymatrix{
 &
X\times Y
\ar[d]^{\sigma}
&
\\
X
\ar[ur]^{\langle 1,0\rangle}
\ar[r]^-{\alpha}
\ar[dr]_{\langle 1,0\rangle}
&
R\sqz S
\ar[d]^{\theta}
&
Y
\ar[ul]_-{\langle 0,1\rangle}
\ar[l]_-{\beta}
\ar[dl]^-{\langle 0,1\rangle}
\\
&
X\times Y
&
}
}
\end{equation}
commute. However, since $\langle 1,0\rangle$ and $\langle 0,1\rangle$ are jointly
epimorphic any such morphism must satisfy $\theta \sigma = 1_{X\times Y}$ and
so (c) holds. The converse is immediate, since (c) implies that there is a
morphism $\sigma$ making the upper part of
\eqref{diag:char_of_huq_commutes_for_bourn_normal} commute, and as mentioned
(b) is equivalent to the existence of such a morphism.

\end{proof}
\begin{lemma}
\label{lemma:kernel_of_theta}
The objects $\Ke(\theta)$ and $X\times_A Y$, where $X\times_A Y$ is the pullback of $k:X\to A$ and $l:Y\to A$, are isomorphic.
\end{lemma}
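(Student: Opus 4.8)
The plan is to produce the isomorphism directly from the universal property of the limit cone \eqref{limit_defining_box_zero} defining $R\sqz S$, restricted to the fibre over $0$. Recall that $\theta = \langle \pi_1\theta, \pi_2\theta\rangle : R\sqz S \to X\times Y$, so $\Ke(\theta)$ is the joint equalizer of $\pi_1\theta$ and $0$, and of $\pi_2\theta$ and $0$; that is, an object $K$ with a map to $R\sqz S$ is in $\Ke(\theta)$ precisely when $\pi_1\theta$ and $\pi_2\theta$ both vanish on it. First I would describe $\Ke(\theta)$ concretely: a cone into $R\sqz S$ that dies under both $\pi_1\theta$ and $\pi_2\theta$ is the same thing as a cone into the diagram \eqref{limit_defining_box_zero} whose $X$-leg (namely $\pi_1\theta$) and $Y$-leg (namely $\pi_2\theta$) are zero. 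Feeding zeros into those two legs forces, via $k$ and $l$ and the relations $r_1,r_2,s_1,s_2$, that the remaining data is governed by the pullbacks \eqref{Def k and l}: chasing $0$ through $\pi_1\theta$ up to $A$ gives $0$, and the $S$-leg must then land in the zero class of $S$, which by the left-hand pullback in \eqref{Def k and l} is $X$; symmetrically the $R$-leg lands in $Y$. So the limit defining $\Ke(\theta)$ collapses to the limit of the span $X \xrightarrow{k} A \xleftarrow{l} Y$, i.e. to $X\times_A Y$.

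Concretely, I would exhibit the two mutually inverse morphisms. Given the pullback $X\times_A Y$ with projections $q_1 : X\times_A Y \to X$ and $q_2 : X\times_A Y \to Y$ satisfying $k q_1 = l q_2$, one checks that the data $(q_1, 0, q_2)$ together with the evident composites defines a cone over \eqref{limit_defining_box_zero}: the middle value $a$ is $k q_1 = l q_2$, the $S$-component is $e_S k q_1$ and the $R$-component is $e_R l q_2$, and one verifies commutativity of all four squares using that $e_S, e_R$ are the reflexivity maps and that $\langle 1,0\rangle, \langle 0,1\rangle$ were used in \eqref{Def k and l}. This induces a unique $\iota : X\times_A Y \to R\sqz S$, and since $\pi_1\theta\,\iota = q_1$ composed into $X$... more precisely $\pi_1\theta\,\iota = 0$ by construction — wait, one must be slightly careful: the $X$-leg of the cone is $q_1$, not $0$. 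The correct statement is that $\iota$ followed by the map $m = \langle \pi_1\theta, s_2 p_1\psi, \pi_2\theta\rangle$ of the previous proof equals $\langle q_1, k q_1, q_2\rangle$; to land in $\Ke(\theta)$ one instead uses the cone $(0, 0, 0)$ in the outer legs but with nontrivial $S$- and $R$-components recording $q_1$ and $q_2$. So the cleaner route: a map into $\Ke(\theta)$ is a map $t$ into $R\sqz S$ with $\pi_1\theta\, t = 0 = \pi_2\theta\, t$; the remaining legs $p_1\psi\, t : \cdot \to S$ and $p_2\psi\, t : \cdot \to R$ then satisfy $s_1 p_1\psi\, t = 0$ hence factor through $\kappa$ (by the left pullback of \eqref{Def k and l}) as $\kappa \bar x$, and $r_2 p_2\psi\, t = 0$ factor through $\lambda$ as $\lambda \bar y$; and the compatibility forced by \eqref{limit_defining_box_zero} gives $k\bar x = l\bar y$, i.e. $(\bar x,\bar y)$ is a point of $X\times_A Y$. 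This assignment and its inverse are natural, giving the isomorphism.

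The main obstacle I anticipate is purely bookkeeping: tracking the seven legs of the limit cone \eqref{limit_defining_box_zero} and checking that imposing $\pi_1\theta = \pi_2\theta = 0$ really does reduce the defining limit to the two-map span whose limit is $X\times_A Y$, without accidentally collapsing too much or too little. The key inputs are exactly the two pullback squares in \eqref{Def k and l} — which say that the zero class of $S$ is $X$ (along $\kappa$) and the zero class of $R$ is $Y$ (along $\lambda$) — plus the fact that $e_S, e_R$ are sections of the relation legs. Once those reductions are in place, uniqueness of factorizations through limits on both sides makes the two constructed morphisms mutually inverse, and no unital-category hypothesis is even needed for this particular lemma.
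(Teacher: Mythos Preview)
Your approach is essentially the one the paper takes: compute $\Ke(\theta)$ by imposing $\pi_1\theta=0=\pi_2\theta$ on the limit cone \eqref{limit_defining_box_zero} and watch the remaining data collapse to the span $X\xrightarrow{k}A\xleftarrow{l}Y$. The paper organises this via a diagram of nested pullback diamonds (first reducing to $\Ke(\langle s_1p_1,r_2p_2\rangle)$ using that \eqref{Def theta} is a pullback, then pasting pullbacks), whereas you argue directly from the cone; but the content is the same.

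There is, however, a systematic swap in your factorisations. From \eqref{Def k and l} one has $r_1\kappa=k$, $r_2\kappa=0$ (left square) and $s_1\lambda=0$, $s_2\lambda=l$ (right square). Thus $\kappa:X\to R$ exhibits $X$ as the kernel of $r_2$, and $\lambda:Y\to S$ exhibits $Y$ as the kernel of $s_1$. So when you impose $\pi_1\theta\,t=0$ and use the top-left square of \eqref{limit_defining_box_zero} to get $s_1p_1\psi\,t=0$, the $S$-leg factors through $\lambda$ (not $\kappa$), giving $\bar y:\,\cdot\to Y$ with $p_1\psi\,t=\lambda\bar y$; symmetrically $r_2p_2\psi\,t=0$ forces $p_2\psi\,t=\kappa\bar x$ for some $\bar x:\,\cdot\to X$. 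The middle square $s_2p_1\psi=r_1p_2\psi$ then yields $l\bar y=s_2\lambda\bar y=r_1\kappa\bar x=k\bar x$, which is the pullback condition. Your text has the roles of $\kappa$ and $\lambda$ (and correspondingly ``left-hand'' vs.\ ``right-hand'' in \eqref{Def k and l}) interchanged throughout; once that is fixed the argument goes through, and the first, aborted attempt in your second paragraph can simply be deleted.
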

\begin{proof}
Note that since \eqref{Def theta} is a pullback, it follows that $\Ke(\theta)\cong \Ke(\langle s_1p_1,r_2p_2\rangle)$.
Now consider the diagram
\begin{equation}
\label{diag:proof_kernel_of_theta}
\vcenter{
\xymatrix@!@C=-0.75ex@R=0ex{
&
&
&
\Ke(\theta)
\ar[dl]_{v}
\ar[dr]^{u}
\ar@{}[dd]|{(*)}
\\
&
&
\Ke(s_1p_1)
\ar[dl]_{j}
\ar[dr]_(0.6){\ke(s_1p_1)}
&
&
\Ke(r_2p_2)
\ar[dl]^(0.6){\ke(r_2p_2)}
\ar[dr]^{i}
&
&
\\
&
Y
\ar[dl]
\ar[dr]^{\lambda}
&
&
P
\ar[dl]_{p_1}
\ar[dr]^{p_2}
&
&
X
\ar[dl]_{\kappa}
\ar[dr]
&
\\
0
\ar[dr]
&
&
S\ar[dl]_{s_1}
\ar[dr]^{s_2}
&
&
R
\ar[dl]_{r_1}
\ar[dr]^{r_2}
&
&
0
\ar[dl]
\\
&
A
&
&
A
&
&
A
&
}}
\end{equation}
consisting of the diagram \eqref{Def Composite of relations} and
in which:
\begin{description}
\item{-} $i$ and $j$ are the unique morphism such that $\lambda j =p_1 \ke(s_1p_1)$ and 
$\kappa i = p_2 \ke(r_2p_2)$;
\item{-} $u$ and $v$ are the unique morphisms making $(*)$ in the diagram above, commute.
\end{description}
Since each diamond in \eqref{diag:proof_kernel_of_theta} is a pullback and $r_1\kappa = k$ and $s_2\lambda =l$,
it follows that the diagram
\[
\xymatrix{
\Ke(\theta)
\ar[d]_{iu}
\ar[r]^{jv}
&
Y
\ar[d]^{l}
\\
X
\ar[r]_{k} & A
}
\]
is a also a pullback, and therefore, $\Ke(\theta)\cong X\times_A Y$ as desired.
\end{proof}
Let $\X$ be a pointed category. Consider the condition:
\begin{condition}\label{zero_det}
A morphism $f:A\to B$ in $\X$
is a monomorphism if and only if the kernel of $f$ is $0$.
\end{condition}
\begin{remark}\label{remark:zero_dat_known}
Note that a pointed category $\X$ satisfies Condition \ref{zero_det}
if and only if
each reflexive relation in $\X$ satisfies what was called Condition ($*\pi_0)$
in
\cite{GRAN_JANELIDZE:2014}, with respect to the ideal of zero morphisms.
\end{remark}
Recall that a regular category
\cite{BARR:1971}
is normal
\cite{JANELIDZE_Z:2010}
if and only if
every regular epimorphism is a normal epimorphism. 
The following proposition follows from Corollary 2.3
of
\cite{GRAN_JANELIDZE:2014}, however
we give a direct proof in order to avoid
introducing notation and terminology that would not
otherwise be needed in this paper.
\begin{proposition}
A regular category $\X$ with cokernels is normal if and only if it
satisfies Condition \ref{zero_det}.
\end{proposition}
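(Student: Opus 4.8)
The plan is to prove the two implications directly, using the (regular epimorphism, monomorphism) factorization of a morphism in a regular category together with the standard fact that a normal epimorphism is, up to isomorphism, the cokernel of its own kernel.

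For the implication that normality forces Condition~\ref{zero_det}, I would first note that one half of the condition holds in every pointed category: a kernel is a monomorphism, and a zero morphism $0\colon K\to A$ is a monomorphism only if $K\cong 0$, so whenever $f$ is a monomorphism its kernel is $0$. For the converse half, assume $\ke(f)=0$ and factor $f=me$ with $e$ a regular epimorphism and $m$ a monomorphism. Since $m$ is a monomorphism one checks at once that $\ke(e)\cong\ke(me)=\ke(f)=0$, so by normality $e$ is a normal epimorphism, i.e.\ the cokernel of $\ke(e)$; but the cokernel of the morphism $0\to A$ is $1_A$, so $e$ is an isomorphism and $f=me$ is a monomorphism.

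For the converse implication, let $e\colon A\to B$ be a regular epimorphism and aim to show it is the cokernel of $\ke(e)$. Using that $\X$ has cokernels, I would form $q=\textnormal{coker}(\ke(e))\colon A\to Q$ and the unique $\bar e\colon Q\to B$ with $e=\bar e q$ (it exists because $e\cdot\ke(e)=0$). Here $q$ is a regular epimorphism, being a cokernel, and $\bar e$ is a regular epimorphism as well, since a composite $\bar e q$ that is a strong epimorphism forces $\bar e$ to be a strong epimorphism, and strong epimorphisms are regular in a regular category. The heart of the argument is to show $\ke(\bar e)=0$: the kernel $\ke(e)\colon\Ke(e)\to A$ is precisely the pullback of $\ke(\bar e)\colon\Ke(\bar e)\to Q$ along $q$ (both represent the subobject of $A$ consisting of the points annihilated by $e$), so the induced comparison morphism $\Ke(e)\to\Ke(\bar e)$, being a pullback of the regular epimorphism $q$, is itself a regular, hence strong, epimorphism; composing it with the monomorphism $\ke(\bar e)$ gives $q\cdot\ke(e)=0$, so the comparison morphism is a zero morphism, and a strong epimorphism that is a zero morphism has zero codomain. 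Hence $\Ke(\bar e)\cong 0$, so Condition~\ref{zero_det} makes $\bar e$ a monomorphism; being also a regular epimorphism, $\bar e$ is an isomorphism, and therefore $e\cong q=\textnormal{coker}(\ke(e))$ is a normal epimorphism.

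I expect the main obstacle to be exactly that middle step of the second implication: identifying $\Ke(e)$ with the pullback of $\Ke(\bar e)$ along $q$ and then extracting from this that $\Ke(\bar e)$ vanishes, which rests on pullback-stability of regular epimorphisms and on the observation that a zero morphism which is a strong epimorphism has zero codomain. Both of these are elementary but worth spelling out; everything else is formal bookkeeping with kernels, cokernels, monomorphisms, and the chosen factorization.
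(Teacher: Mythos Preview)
Your proof is correct and follows essentially the same approach as the paper's: for the nontrivial direction you form the cokernel $q$ of $\ke(e)$, factor $e=\bar e q$, use the pullback square relating $\Ke(e)$ and $\Ke(\bar e)$ together with pullback-stability of regular epimorphisms to see that $\Ke(\bar e)\cong 0$, and then apply Condition~\ref{zero_det} to conclude $\bar e$ is a monomorphism and hence an isomorphism. The only cosmetic difference is that from the identity $\ke(\bar e)\cdot u = q\cdot\ke(e)=0$ you cancel the monomorphism $\ke(\bar e)$ and then use that $u$ is a strong epimorphism, whereas the paper cancels the epimorphism $u$ and then uses that $\ke(\bar e)$ is a monomorphism; and you spell out the ``immediate'' forward implication in more detail than the paper does.
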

\begin{proof}
It is immediate that a normal category satisfies Condition \ref{zero_det}. It remains
to prove the converse. Suppose $f:A\to B$ is a regular epimorphism
and consider the diagram
\[
\xymatrix{
\Ke(f)
\ar[r]^{\ke(f)}
\ar@{-->}[d]_{u}
&
A
\ar[r]^{f}
\ar@{-->}[d]^{q}
&
B
\\
\Ke(r)
\ar[r]_{\ke(r)}
&
Q
\ar[ur]_{r}
}
\]
in which $q$ is cokernel of $\ker(f)$, $r$ is the unique morphism
with $rq=f$, and $u$ the unique morphism with $\ke(r)u=q\ke(f)$.
Since the left hand square is a pullback it follows that $u$ is
a regular epimorphism. Since $\ke(r)u= q\ke(f)=0$, it follows that
$\ke(r)=0$, and therefore $r$ is monomorphism. Since $r$ is also a regular epimorphism, the latter implies that $r$ is an isomorphism.
\end{proof}
Recall that for a category $\X$ and an object $B$ is $\X$, the category
$\Pt_{\X}(B)$ of points, in the sense of D. Bourn, has objects triples
$(A,\alpha,\beta)$, where $A$ is an object in $\X$,  and $\alpha : A\to B$
and $\beta : B\to A$ are morphisms in $\X$ such that $\alpha\beta=1_B$. A
morphism $f$ from $(A,\alpha,\beta)$ to $(A',\alpha',\beta')$ in $\Pt_{\X}(B)$
is a morphism $f:A\to A'$, such that $\alpha'f=\alpha$ and $f\beta=\beta'$.
Furthermore, a morphism
$p:E\to B$ in $\X$ determines a pullback functor $p^*:\Pt_{\X}(B)\to \Pt_{\X}(E)$
which sends $(A,\alpha,\beta)$ in $\Pt_{\X}(B)$ to
$(E\times_B A,\pi_1,\langle 1,\beta p\rangle)$ in $\Pt_{\X}(E)$, with objects and
morphism defined as in the following commutative diagram
\[
\xymatrix{
E
\ar@/^2ex/[drr]^{\beta p}
\ar@/_2ex/[ddr]_{1_E}
\ar[dr]|{\langle 1,\beta p\rangle}
&
&
\\
&
E\times_B A
\ar[r]^{\pi_2}
\ar[d]_{\pi_1} 
\ar@{}[dr]|{\boxed{1}}
&
A
\ar[d]^{\alpha}
\\
&
E
\ar[r]_{p}
&B
}
\]
in which $\boxed{1}$ is a pullback. When $\X$ is a pointed category, pullback
functors along morphisms of the form $0\to B$ are essentially the same
as kernel functors $\text{Ker}_{B}:\Pt_{\X}(B)\to \X$.
\begin{proposition}\label{characterization_of_zero_det}
For a pointed finitely complete category $\X$ the following
are equivalent:
\begin{enumerate}[(a)]
\item The category $\X$ satisfies Condition \ref{zero_det};
\item For each object $B$ in $\X$ the functor $\textnormal{Ker}_B$ reflects terminal objects;
\item For each object $B$ in $\X$ the functor $\textnormal{Ker}_B$ reflects monomorphisms;
\item For each object $B$ in $\X$ the category $\Pt_{\X}(B)$ satisfies Condition \ref{zero_det};
\item For each morphism $p:E\to B$ in $\X$ the functor $p^*:\Pt_{\X}(B)\to \Pt_{\X}(E)$ reflects terminal objects;
\item For each morphism $p:E\to B$ in $\X$ the functor $p^*:\Pt_{\X}(B)\to \Pt_{\X}(E)$ reflects monomorphisms.
\end{enumerate}
\end{proposition}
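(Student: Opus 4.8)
The plan is to rely on two structural facts recorded in the discussion preceding the statement. First, under the identification $\Pt_{\X}(0)\simeq\X$ the functor $\textnormal{Ker}_B$ is the pullback functor along $0\to B$, so that (b) and (c) are the instances of (e) and (f) at the zero maps, and (d) at $B=0$ is exactly (a); thus it suffices to prove $(a)\Rightarrow(d)$, $(a)\Rightarrow(e)$, $(a)\Rightarrow(f)$, together with $(c)\Rightarrow(b)$ and $(b)\Rightarrow(a)$. Second, each pullback (reindexing) functor $p^{*}:\Pt_{\X}(B)\to\Pt_{\X}(E)$ preserves finite limits: it carries the zero object $(B,1,1)$ to $(E,1,1)$, and it preserves pullbacks, finite limits in the fibres being computed on underlying objects. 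I will also use that the forgetful functor $\Pt_{\X}(B)\to\X$ is faithful, hence reflects monomorphisms, and that a morphism which is simultaneously a monomorphism and a split epimorphism is an isomorphism. The only computation that does real work is this: since $p$ preserves the zero morphism, the kernel in $\X$ of the structure map $\pi_{1}:E\times_{B}A\to E$ of $p^{*}(A,\alpha,\beta)$ is $\Ke(\alpha)$; and the kernel of $h:(A,\alpha,\beta)\to(A',\alpha',\beta')$ formed in $\Pt_{\X}(B)$ has underlying object the pullback $A\times_{A'}B$ of $h$ along $\beta'$, whose structure map $\pi_{2}:A\times_{A'}B\to B$ has kernel in $\X$ equal to $\Ke(h)$.

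For $(a)\Rightarrow(e)$: if $p^{*}(A,\alpha,\beta)$ is terminal, then $\pi_{1}$ is an isomorphism, so $\Ke(\alpha)\cong\Ke(\pi_{1})=0$; by Condition~\ref{zero_det} in $\X$ the morphism $\alpha$ is a monomorphism, and being a split epimorphism it is an isomorphism, so $(A,\alpha,\beta)$ is terminal in $\Pt_{\X}(B)$. For $(a)\Rightarrow(d)$: if $h$ has zero kernel in $\Pt_{\X}(B)$, then $\pi_{2}$ is an isomorphism, so $\Ke(h)\cong\Ke(\pi_{2})=0$; by Condition~\ref{zero_det} the morphism $h$ is a monomorphism in $\X$, hence in $\Pt_{\X}(B)$ by faithfulness. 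For $(a)\Rightarrow(f)$: if $p^{*}h$ is a monomorphism in $\Pt_{\X}(E)$, its kernel there is $0$; since $p^{*}$ preserves kernels, $p^{*}$ carries the kernel of $h$ in $\Pt_{\X}(B)$ to a terminal object of $\Pt_{\X}(E)$, so by (e) that kernel is terminal in $\Pt_{\X}(B)$, and then (d) makes $h$ a monomorphism in $\Pt_{\X}(B)$.

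It remains to close the loop. The implication $(d)\Rightarrow(a)$ is the case $B=0$, and $(e)\Rightarrow(b)$, $(f)\Rightarrow(c)$ are the cases of the zero maps $0\to B$. For $(c)\Rightarrow(b)$: since $(B,1,1)$ is a zero object of $\Pt_{\X}(B)$, the unique morphism $u$ from $(A,\alpha,\beta)$ to $(B,1,1)$, with underlying arrow $\alpha$, is split by the unique morphism in the other direction; so if $\Ke(\alpha)=0$ then $\textnormal{Ker}_B(u)$ is a monomorphism, hence $u$ is a monomorphism by (c), hence an isomorphism, so $(A,\alpha,\beta)$ is terminal. For $(b)\Rightarrow(a)$: given $f:A\to B$ in $\X$ with $\Ke(f)=0$, take the kernel pair $r_{1},r_{2}:R\to A$ of $f$ with diagonal $\delta:A\to R$; then $(R,r_{1},\delta)$ is an object of $\Pt_{\X}(A)$ with $\textnormal{Ker}_A(R,r_{1},\delta)=\Ke(r_{1})\cong\Ke(f)=0$, so by (b) it is terminal in $\Pt_{\X}(A)$, i.e.\ $r_{1}$ is an isomorphism; then $\delta=r_{1}^{-1}$ and $r_{2}=r_{1}$, so $f$ is a monomorphism.

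The main obstacle is the implication $(b)\Rightarrow(a)$: the idea of presenting the kernel pair of $f$ as a point over $A$ whose image under $\textnormal{Ker}_A$ is $\Ke(f)$, so that reflection of terminal objects collapses the kernel pair to the diagonal, is the one step that is not routine bookkeeping. A secondary point requiring some care is the verification that $p^{*}$ preserves finite limits (in particular kernels), which is what powers $(a)\Rightarrow(f)$.
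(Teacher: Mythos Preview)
Your argument is correct and rests on the same ingredients as the paper's proof: the kernel-pair presentation of $f$ as a point over $A$ to pass between (a) and (b), the identification of $\textnormal{Ker}_B$ with $p^*$ along $0\to B$, the limit-preservation of $p^*$, and the split-epi trick for morphisms into the terminal object. The organization differs slightly: the paper first establishes (a)$\Leftrightarrow$(b), then derives (b)$\Rightarrow$(e) from the functor isomorphism $\textnormal{Ker}_E\circ p^*\cong\textnormal{Ker}_B$ together with the general principle that if $FG$ reflects a property and $F$ preserves it then $G$ reflects it, and obtains (e)$\Rightarrow$(d) by observing that change-of-base in a fibre $\Pt_{\X}(B)$ is again change-of-base in $\X$; you instead go from (a) directly to (d), (e), (f) by explicit kernel computations (your identifications $\Ke(\pi_1)\cong\Ke(\alpha)$ and $\textnormal{Ker}_B(\Ke_{\Pt}(h))\cong\Ke(h)$ are precisely the content of those abstract principles unpacked). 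Neither route gains anything substantial over the other.
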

\begin{proof}
For a morphism $f:A\to B$ in $\X$, note that:
\begin{enumerate}[(i)]
\item $f:A\to B$ is a monomorphism 
if and only if in the pullback diagram
\[
\xymatrix{
A\times_B A
\ar[d]_{\pi_1}
\ar[r]^-{\pi_2}
&A
\ar[d]^{f}
\\
A
\ar[r]_{f}
&
B
}
\]
 $\pi_1$ is a isomorphism.
\item  The morphism $\pi_1$ is an isomorphism whenever $(A\times_B A,\pi_1,\langle 1,1\rangle)$
is a terminal object in $\Pt_{\X}(A)$;
\item  The kernel of $f$ is isomorphic
to the kernel of $\pi_1$.
\end{enumerate}
 Combining these observations we see that
(a)$\Leftrightarrow$(b).
For any functor $F$ between pointed categories which preserves terminal objects,
 since morphisms into the terminal object
are necessarily split epimorphisms, one easily shows that if $F$ reflects
monomorphisms, then it reflects terminal objects.
Therefore (f)$\Rightarrow$(e) and (c)$\Rightarrow$(b). 
Recalling that if a composite of functors $FG$ reflects
some property and $F$ preserves it, then $G$ reflects it, and
noting that kernel functors certainly preserve terminal objects,
one easily sees that (b)$\Rightarrow$(e) (just note that for each
morphism $p:E\to B$ the functor $\textnormal{Ker}_E\circ p^*$ is isomorphic to
$\textnormal{Ker}_B$). Since
each pullback functor between points along a morphism in a category of points
of  $\X$ is up to
isomorphism a pullback functor between points for $\X$ it follows that
(e)$\Rightarrow$(d). 
For a functor $F$ between 
pointed finitely complete categories
satisfying Condition 3.4, preserving limits and reflecting terminal objects,
if $F(f)$ is a monomorphism then
$F(\text{Ker}(f))\cong \text{Ker}(F(f)) \cong 0$ and hence
$\text{Ker}(f)\cong 0$ which forces $f$ to be a monomorphism.
This proves (e)$\Rightarrow$(f) since we already know that (e)$\Rightarrow$(d).
The proof is completed by noting that
trivially (f)$\Rightarrow$(c) and (d)$\Rightarrow$(a).
\end{proof}
\begin{proposition}
Let $\V$ be a (quasi)-variety of universal algebras considered as a category,
and let $\X$ be a category with finite limits.
If $\V$ satisfies Condition \ref{zero_det}, then $\V(\X)$ satiesfies
Condition \ref{zero_det}.
\end{proposition}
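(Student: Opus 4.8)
The plan is to notice that the substantive half of Condition \ref{zero_det} is the implication ``$\Ke(f)=0 \Rightarrow f$ a monomorphism'', and to deduce this for $\V(\X)$ from the corresponding statement for $\V$ itself by testing with representable functors. I will use the standard facts that $\V=\V(\Set)$, that the forgetful functor $U\colon\V(\X)\to\X$ is faithful and creates finite limits, and that consequently $\V(\X)$ is pointed --- its zero object being the terminal object of $\X$ equipped with its unique $\V$-algebra structure (here one uses that $\V$ is pointed) --- and that a morphism $f$ of $\V(\X)$ is a monomorphism precisely when $Uf$ is one in $\X$.

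For each object $X$ of $\X$, the representable functor $\X(X,-)\colon\X\to\Set$ preserves finite products, hence postcomposition with it turns an internal $\V$-algebra $M$ into an ordinary $\V$-algebra $\Phi_X(M)$ with underlying set $\X(X,UM)$; for a quasi-variety this is exactly the requirement built into the definition of an internal model. The resulting functor $\Phi_X\colon\V(\X)\to\V$ satisfies $U_{\V}\circ\Phi_X=\X(X,-)\circ U$ and preserves finite limits; since the terminal objects of $\V(\X)$ and of $\V$ are their zero objects, $\Phi_X$ also preserves zero morphisms and therefore kernels.

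Now suppose $f\colon M\to N$ in $\V(\X)$ has $\Ke(f)\cong 0$. For each $X$ we obtain $\Ke(\Phi_X f)\cong\Phi_X(\Ke f)\cong\Phi_X(0)\cong 0$ in $\V$, so Condition \ref{zero_det} in $\V$ makes $\Phi_X(f)$ a monomorphism, i.e.\ the function $\X(X,Uf)$ is injective. As $X$ was arbitrary, $Uf$ is a monomorphism in $\X$, hence $f$ is a monomorphism in $\V(\X)$; together with the converse (monomorphisms have zero kernel, true in any pointed category) this yields Condition \ref{zero_det} for $\V(\X)$.

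The only point needing care is the quasi-variety case: one must fix the meaning of ``internal $\V$-algebra'' so that the functors $\Phi_X$ land in $\V$ and $U$ still creates finite limits --- e.g.\ by taking internal algebras for the underlying variety all of whose generalized elements are $\V$-algebras, which is closed under finite limits because quasi-varieties are closed under products and subalgebras. For a genuine variety this subtlety disappears and the argument is entirely formal.
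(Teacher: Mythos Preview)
Your proof is correct and is essentially the paper's own argument: the paper packages your family $(\Phi_X)_{X\in\X}$ as the single functor $\tilde Y\colon\V(\X)\to\V^{\X^{\op}}$ induced by the Yoneda embedding (using $\V(\Set^{\X^{\op}})\cong\V^{\X^{\op}}$), and replaces your pointwise verification by the one-line observation that Condition~\ref{zero_det} lifts to functor categories. You are, incidentally, more careful than the paper about what ``internal $\V$-algebra'' should mean in the quasi-variety case.
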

\begin{proof}
Since the Yoneda embedding $Y : \X \to \Set^{\X^{\op}}$ preserves and reflects limits
and $\V(\Set^{\X^{\op}})=\V^{\X^{\op}}$, taking internal $\V$ algebras we 
obtain a functor $\tilde Y : \V(\X) \to \V^{\X^{\op}}$
which preserves and reflects limits. The claim now follows by noting that
Condition \ref{zero_det} lifts to functor categories.
\end{proof}
\begin{example}
Recall that an implication algebra is a triple $(X,\cdot,1)$ where $X$
is a set,
$\cdot$ is a binary operation and $1$ is constant satisfying the
axioms:
$(xy)x=x$, $(xy)y=(yx)x$, $x(yz)=y(xz)$, $11=1$. H.\ P.\ Gumm and A.\ Ursini showed
in \cite{GUMM_URSINI:1984} that the variety of implication algebras form
an ideal determined variety of universal algebras which is not
congruence permutable. This means that the category of implication algebras
is ideal determined but not Mal'tsev \cite{JANELIDZE_MARKI_THOLEN_URSINI:2010}.
Since the two element boolean algebra $2=(2,\to,1)$ forms an implication
algebra and $\{(0,1),(1,0),(1,1)\}$ is a sub-algebra of $2\times 2$,
we see that it is not a unital category. However adding an independent
binary operation $*$ satisfying  $x*1=1*x=x$ will produce a unital
ideal determined category, and hence a strongly unital normal category.
We leave as open problems whether this latter variety is Mal'tsev or not
and if their exists a normal strongly unital variety which is not Mal'tsev.
On the other hand the previous proposition tells us that internal such
algebras in a category with finite limits always produce a category which is
strongly unital and satisfies Condtion \ref{zero_det}.
%Internal such algebras in a category with finite limits will
%always produce a category which is (strongly) unital and satisfying 
%Condition \ref{zero_det}. Indeed, recalling that in such a variety $xx=1$,
%since $(a(bb))(bb) = (b(ab))(bb)=b((b(ab))b)=bb$ and hence
%$xx=((yy)(xx))(xx)=((xx)(yy))(yy)=yy$,
%we see that if $\Ke(f)=\{1\}$, and $f(a)=f(b)$, then
%$f(ab)=1=f(ba)$ and so $ab=1$ and $ba=1$. Therefore $a=1a=(ba)a=(ab)b=1b=b$.
\end{example}
\begin{example}
It is easy to show that the quasi-variety $\mathcal{V}$ of universal algebras, with terms
$p(x,y)$ and $s(x,y)$ satisfying $p(x,0)=p(0,x)=x$, $s(x,0)=x$, $s(x,x)=0$,
and $s(x,y)=0 \Rightarrow x=y$, is a normal strongly unital category.
In fact, it turns out that this quasi-variety is
almost exact (i.e every regular epimorphism is an effective descent morphism)
and is not Mal'tsev. As before, by the previous proposition, we obtain that
internal such algebras in a finitely complete catgory will produce strongly
unital categories satisfying Condition \ref{zero_det}. In particular, if the
base category is the product of the category of sets with the quasi-variety
$\mathcal{W}$ of abelian groups satisfying $4x=0 \Rightarrow 2x=0$, then
resulting category
will on the one hand not be Mal'tsev since $\mathcal{V}$ is not, and on the
other hand not be regular (and hence not normal) since
$\mathcal{V}(\mathcal{W})=\mathcal{W}$ which is not regular.
\end{example}
\begin{corollary}
\label{corollary:_meet_trivial_commutes}
Let $k$ and $l$ be Bourn-normal monomorphisms in a unital
category $\C$ satisfying Condition \ref{zero_det}.
If $k$ and $l$ have trivial pullback, then $k$ and $l$ commute.
\end{corollary}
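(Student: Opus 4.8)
The plan is to read this off from Theorem~\ref{theorem:char_of_huq_commutes_for_bourn_normal}, Lemma~\ref{lemma:kernel_of_theta}, Proposition~\ref{proposition:theta_strong} and Condition~\ref{zero_det}. Fix equivalence relations $r_1,r_2:R\to A$ and $s_1,s_2:S\to A$ together with $\kappa$ and $\lambda$ witnessing that $k$ and $l$ are Bourn-normal as in \eqref{Def k and l}, form $R\sqz S$ and the morphism $\theta:R\sqz S\to X\times Y$ as in \eqref{Def theta}, and let $\alpha,\beta$ be as in \eqref{Def alpha and beta}. By Theorem~\ref{theorem:char_of_huq_commutes_for_bourn_normal}, to prove that $k$ and $l$ Huq-commute it suffices to verify condition (c) of that theorem, and I claim it is enough to show that $\theta$ is an isomorphism: in that case $\theta^{-1}$ does the job, since $\theta\alpha=\langle 1,0\rangle$ and $\theta\beta=\langle 0,1\rangle$ force $\theta^{-1}\langle 1,0\rangle=\alpha$ and $\theta^{-1}\langle 0,1\rangle=\beta$, so $\theta^{-1}$ is a morphism of cospans splitting $\theta$.

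So the only real task is to show $\theta$ is an isomorphism. First I would use the hypothesis: $k$ and $l$ have trivial pullback means $X\times_A Y\cong 0$, whence Lemma~\ref{lemma:kernel_of_theta} gives $\Ke(\theta)\cong X\times_A Y\cong 0$. Since $\C$ is pointed and satisfies Condition~\ref{zero_det}, a morphism with zero kernel is a monomorphism, so $\theta$ is a monomorphism. On the other hand, Proposition~\ref{proposition:theta_strong} says $\theta$ is a strong epimorphism. A morphism that is at once a monomorphism and a strong epimorphism is an isomorphism: apply the diagonal fill-in property of the strong epimorphism $\theta$ to the commutative square $\theta\cdot 1_{R\sqz S}=1_{X\times Y}\cdot\theta$, using that the right-hand copy of $\theta$ is a monomorphism, to obtain a two-sided inverse. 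Hence $\theta$ is an isomorphism, and by the previous paragraph $k$ and $l$ Huq-commute.

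The argument is mostly bookkeeping once the earlier results are assembled; the one place that actually needs Condition~\ref{zero_det} (via the identification $\Ke(\theta)\cong X\times_A Y$ of Lemma~\ref{lemma:kernel_of_theta}) is precisely the step passing from the triviality of the pullback to $\theta$ being a monomorphism, after which the elementary ``strong epimorphism $+$ monomorphism $\Rightarrow$ isomorphism'' fact closes the proof. I do not anticipate a genuine obstacle beyond making sure the chosen equivalence relations $R$ and $S$ are fixed throughout and that the cospan-splitting in condition (c) is read off correctly from $\theta^{-1}$.
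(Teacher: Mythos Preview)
Your proposal is correct and follows essentially the same route as the paper: use Lemma~\ref{lemma:kernel_of_theta} and Condition~\ref{zero_det} to see that $\theta$ is a monomorphism, combine with Proposition~\ref{proposition:theta_strong} to conclude $\theta$ is an isomorphism, and then invoke Theorem~\ref{theorem:char_of_huq_commutes_for_bourn_normal}. The only difference is that you spell out explicitly why $\theta$ being an isomorphism yields condition~(c) and why ``mono $+$ strong epi $\Rightarrow$ iso,'' whereas the paper leaves both as understood.
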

\begin{proof}
If $k$ and $l$ have trivial pullback, then by Lemma
\ref{lemma:kernel_of_theta}
the morphism
$\theta$ has trivial kernel and hence is a monomorphism. Moreover, since by
Proposition
\ref{proposition:theta_strong}
the morphism
$\theta$ is strong epimorphism it foolws that it is an isomorphism.
The claim now follows from Theorem
\ref{theorem:char_of_huq_commutes_for_bourn_normal}
.
\end{proof}
\begin{lemma}
\label{lemma:kernel_of_coperator}
Let $k:X\to A$ and $l:Y\to A$ be monomorphisms in
a strongly unital category $\C$
which commute, and let
$\varphi : X\times Y \to A$ be their cooperator. If 
$\langle u,v \rangle : W\to X\times Y$ is the kernel of $\varphi$,
then $u$ and $v$ are central monomorphisms and $ku=k(-v)$. 
\end{lemma}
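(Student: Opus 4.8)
The plan is to read all three conclusions off the structure carried by $W$. Recall $W=\Ke(\varphi)$ and write $\langle u,v\rangle=\ke(\varphi)$, so that $\varphi\langle u,v\rangle=0$ while $\varphi\langle 1,0\rangle=k$ and $\varphi\langle 0,1\rangle=l$. I would begin by recording the elementary consequences. Since $k$ and $l$ Huq-commute, Lemma \ref{lemma:mono_cancels}(iii) gives that $ku:W\to A$ and $lv:W\to A$ Huq-commute, and by uniqueness of the cooperator one checks that $\varphi\circ(u\times v):W\times W\to A$ is their cooperator; precomposing it with $\langle 1_W,1_W\rangle$ yields $\varphi\langle u,v\rangle=0$. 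Informally this says that $ku$ and $lv$ are mutually inverse, and it underlies the last assertion.

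The heart of the proof --- and the one place where \emph{strong} unitality is used essentially --- is that $u$ and $v$ are monomorphisms. I would derive this from the claim that the morphisms $\langle\pi_1,\varphi\rangle:X\times Y\to X\times A$ and $\langle\varphi,\pi_2\rangle:X\times Y\to A\times Y$ are monomorphisms. Indeed, if $p,q:T\to W$ satisfy $up=uq$, then $\langle\pi_1,\varphi\rangle\langle u,v\rangle p=\langle up,0\rangle=\langle uq,0\rangle=\langle\pi_1,\varphi\rangle\langle u,v\rangle q$, so $\langle u,v\rangle p=\langle u,v\rangle q$ and hence $p=q$; thus $u$ is a monomorphism, and $v$ is one by the symmetric computation with $\langle\varphi,\pi_2\rangle$. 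To see that $\langle\pi_1,\varphi\rangle$ is a monomorphism, observe that it is a morphism of points over $X$, from $(X\times Y,\pi_1,\langle 1,0\rangle)$ to $(X\times A,\pi_1,\langle 1,k\rangle)$, that is sent by $\textnormal{Ker}_X$ to the monomorphism $l$; strong unitality forces $\textnormal{Ker}_X$ to reflect monomorphisms in this situation (this can fail when $\C$ is only unital). This reflection step is where I expect the real work to lie; everything afterwards is formal.

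Granting the monomorphisms, the rest falls out. Using strong unitality once more one obtains that $ku$ factors through $l$, say $ku=l\circ(-v)$ for the morphism $-v:W\to Y$ it determines (unique, as $l$ is monic), and symmetrically that $lv$ factors through $k$; this yields the last identity, $ku=l(-v)$. For centrality, since $k$ is a monomorphism Lemma \ref{lemma:mono_cancels}(ii) reduces ``$u$ Huq-commutes with $1_X$'' to ``$ku$ Huq-commutes with $k$'', and $ku=l\circ(-v)$ does Huq-commute with $k=k\circ 1_X$ by Lemma \ref{lemma:mono_cancels}(iii), since $k$ and $l$ Huq-commute; hence $u$ is a central monomorphism, and symmetrically so is $v$. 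Finally, being a central monomorphism, $u$ Huq-commutes with itself --- apply Lemma \ref{lemma:mono_cancels}(iii) to the commuting pair $(u,1_X)$ --- so the pair $(u,u)$ cooperates; a monomorphism cooperating with itself endows its domain with an internal commutative monoid structure, which in the strongly unital category $\C$ upgrades to an abelian group structure, and with respect to this structure one checks that $-v=v\circ\iota_W$, where $\iota_W$ is the negation of $W$. This justifies the notation and completes the argument.
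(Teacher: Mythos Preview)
Your strategy inverts the paper's. The paper first argues that $u$ and $v$ are central: the two pullback squares (equivalently $\ker(\varphi)\wedge\langle 1,0\rangle=0=\ker(\varphi)\wedge\langle 0,1\rangle$, which hold because $k$ and $l$ are monomorphisms) together with Lemma~\ref{lemma:product_decomposes} yield centrality directly. Only afterwards, with the partial additive structure that centrality supplies, does the paper carry out the computation
\[
ku=\varphi\langle u,0\rangle=\varphi\bigl(\langle u,v\rangle+\langle 0,-v\rangle\bigr)=\varphi\langle u,v\rangle+\varphi\langle 0,-v\rangle=l(-v).
\]
You instead try to establish that $u,v$ are monomorphisms and that $ku=l(-v)$ first, and then deduce centrality from the identity.

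The substantive gap is your monomorphism step. You assert that ``strong unitality forces $\textnormal{Ker}_X$ to reflect monomorphisms in this situation'', but Proposition~\ref{characterization_of_zero_det} shows that reflection of monomorphisms by the kernel functors is precisely Condition~\ref{zero_det}; it is \emph{not} a consequence of strong unitality, which only guarantees that each fibre $\Pt_\C(B)$ is unital. You give no argument tailored to the particular morphism $\langle\pi_1,\varphi\rangle$ that would circumvent this, and none is apparent: the kernel of $\langle\pi_1,\varphi\rangle$ is indeed trivial (it equals $\ker(l)=0$), but passing from trivial kernel to monomorphism is exactly the content of Condition~\ref{zero_det}. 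The downstream steps inherit the difficulty. In your ordering the morphism ``$-v$'' and the factorization $ku=l(-v)$ are invoked before any additive structure is available; your eventual justification of $-v$ via an internal abelian group structure on $W$ rests on the centrality of $u$, which you only obtain \emph{from} the identity $ku=l(-v)$, so as written the reasoning is circular. Reordering to prove centrality first---as the paper does---dissolves this, and the additive manipulation then becomes legitimate.
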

\begin{proof}
Since each of the squares in the following diagram
\[
\xymatrix{
0
\ar[r]
\ar[d]
&
W
\ar[d]_{\langle u,v\rangle}
&
0
\ar[l]
\ar[d]
\\
X
\ar[r]_{\langle 1,0\rangle}
&
X\times Y
&
Y
\ar[l]^{\langle 0,1\rangle}
}
\]
are pullbacks, we see via Lemma \ref{lemma:product_decomposes}, that $u$ and
$v$ are central.
To complete the proof just note that
$\langle u,0\rangle = \langle u,v\rangle+\langle 0,-v\rangle$
and therefore
\begin{align*}
ku&=\varphi \langle u,0\rangle\\
&= \varphi (\langle u,v\rangle + \langle 0,-v\rangle)\\
&= \varphi \langle u,v\rangle + \varphi \langle 0,-v\rangle\\
&= \varphi \langle 0,-v\rangle\\
&=l(-v).
\end{align*}
\end{proof}
\begin{proposition}
\label{prop:copoerator_join}
Let $k$ and $l$ be Bourn-normal monomorphisms in a strongly unital
category $\C$ satisfying Condition \ref{zero_det}.
If $k$ and $l$ have trivial pullback, then $k$ and $l$ commute
and their cooperator $\varphi : X\times Y\to A$ is a monomorphism,
which is also their join.
\end{proposition}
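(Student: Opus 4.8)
The plan is to dispose of the three claims in turn. The first — that $k$ and $l$ commute — is nothing but Corollary \ref{corollary:_meet_trivial_commutes} (a strongly unital category is in particular unital), so it remains only to compute the kernel of the cooperator $\varphi$ and then to verify the universal property of the join.

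For the monomorphism claim I would invoke Lemma \ref{lemma:kernel_of_coperator}, which applies since $\C$ is strongly unital and $k,l$ are commuting monomorphisms. Writing $\langle u,v\rangle : W\to X\times Y$ for the kernel of $\varphi$, that lemma gives $ku=l(-v)$, so $\langle u,-v\rangle$ is a cone over the cospan $(k,l)$ and hence factors through the pullback $X\times_A Y$. By hypothesis this pullback is $0$, so any morphism into it is zero and therefore, precomposing with the projections, $u=0$ and $v=0$. Thus $\langle u,v\rangle$ is simultaneously a zero morphism and (being a kernel) a monomorphism, which forces $1_W=0$, i.e.\ $W\cong 0$. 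Hence $\Ke(\varphi)\cong 0$, and Condition \ref{zero_det} immediately yields that $\varphi$ is a monomorphism.

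For the last claim I would regard $\varphi$ as a subobject of $A$ and check that it is the least upper bound of $k$ and $l$. It is an upper bound because $\varphi\langle 1,0\rangle=k$ and $\varphi\langle 0,1\rangle=l$ exhibit $k$ and $l$ as factoring through $\varphi$. For minimality, take any monomorphism $m:M\to A$ with $k=mk'$ and $l=ml'$. Since $k$ and $l$ Huq-commute and $m$ is a monomorphism, Lemma \ref{lemma:mono_cancels}(ii) shows $k'$ and $l'$ Huq-commute; let $\varphi':X\times Y\to M$ be their cooperator. Then $m\varphi'$ and $\varphi$ agree after precomposition with $\langle 1,0\rangle$ and with $\langle 0,1\rangle$, which are jointly (strongly) epimorphic in a unital category, so $m\varphi'=\varphi$; thus $\varphi\leq m$. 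Hence $\varphi$ is the join of $k$ and $l$.

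The only point that needs a moment's care is the step in the second paragraph where one concludes from "$\langle u,v\rangle$ is a zero morphism and a monomorphism" that $W$ is a zero object; everything else is a bookkeeping application of Corollary \ref{corollary:_meet_trivial_commutes}, Lemma \ref{lemma:kernel_of_coperator}, Lemma \ref{lemma:mono_cancels}, and Condition \ref{zero_det}, so I do not expect any genuine obstacle.
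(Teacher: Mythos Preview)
Your proof is correct and follows essentially the same route as the paper's: both invoke Corollary~\ref{corollary:_meet_trivial_commutes} for commutativity, Lemma~\ref{lemma:kernel_of_coperator} together with Condition~\ref{zero_det} for the monomorphism claim, and the jointly (strongly) epimorphic property of $\langle 1,0\rangle,\langle 0,1\rangle$ for the join. You merely spell out the two steps the paper leaves to the reader, namely how $ku=l(-v)$ combines with the trivial-pullback hypothesis to force $\Ke(\varphi)\cong 0$, and how the factorisation $\varphi\leq m$ is obtained.
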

\begin{proof}
By Corollary \ref{corollary:_meet_trivial_commutes} we know that $k$ and $l$ commute.
It remains
to show that their cooperator is a monomorphism and is their join. The
first point follows from Condition \ref{zero_det} since
Lemma \ref{lemma:kernel_of_coperator} implies the kernel of $\varphi$ is zero. The final point follows
immediately from the fact that $\langle 1,0\rangle$ and $\langle 0,1\rangle$
are jointly strongly epimorphic.
\end{proof}
Recall that in the Mal'tsev context an equivalence relation $r_1,r_2:R\to A$ is
essentially the same thing as a monomorphism
\[
\xymatrix{
R
\ar@<0.5ex>[dr]^{r_1}
\ar[rr]^-{\langle r_1,r_2\rangle}
&
&
A\times A
\ar@<0.5ex>[dl]^{\pi_1}
\\
&
A
\ar@<0.5ex>[ul]^{e}
\ar@<0.5ex>[ur]^{\langle 1,1\rangle}
&
}
\]
in the category $\Pt(A)$. Moreover such a monomorphism $\langle r_1,r_2 \rangle$ is
necessarily Bourn-normal. To see why, consider the pullback diagram
\[
\xymatrix{
R\times_A R
\ar[d]_{p_1}
\ar[r]^{p_2}
&
R
\ar[d]^{r_1}
\\
R
\ar[r]_{r_1}
&
A.
}
\]
It follows that
$\langle r_1p_1,r_2p_1\rangle,\langle r_1p_1,r_2p_2\rangle:
(R\times_A R,r_1p_1,\langle e,e\rangle)
\to
(A\times A,\pi_1,\langle 1,1\rangle)$
(where $e$ is the splitting of $r_1$ and $r_2$) is an equivalence relation and the diagrams
\[
\xymatrix{
A\times (A\times A)
\ar[d]_{1\times \pi_1}
\ar[r]^-{1\times \pi_2}
&
A\times A\ar[d]^{\pi_1}\\
A\times A\ar[r]_{\pi_1} & A
}
\xymatrix@C=10ex{
R
\ar[d]_{\langle r_1,r_2\rangle}
\ar[r]^-{\langle 1,r_1 e\rangle}
&
R\times_A R
\ar[d]^{\langle r_1\pi_1,\langle r_2p_1,r_2p_2\rangle\rangle}
\\
A\times A
\ar[r]_-{1\times \langle p_2,p_1\rangle}
&
A\times (A\times A)
}
\]
are pullbacks.
\begin{theorem}
Let $k$ and $l$ be Bourn-normal monomorphisms in a Mal'tsev
category $\C$ satisfying Condition \ref{zero_det}.
If $k$ and $l$ have trivial pullback, then $k$ and $l$ commute
and their cooperator $\varphi : X\times Y\to A$ is a Bourn-normal
monomorphism which is also their join.
\end{theorem}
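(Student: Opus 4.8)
The plan is to deduce the result, via the fibration of points, from Proposition~\ref{prop:copoerator_join}. Since a pointed Mal'tsev category is strongly unital, Proposition~\ref{prop:copoerator_join} applies to $\C$ itself: $k$ and $l$ commute, and their cooperator $\varphi:X\times Y\to A$ is a monomorphism which is their join. So all that remains is to show that $\varphi$ is Bourn-normal, and the idea is to work inside $\Pt_{\C}(A)$.

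As recalled just before the theorem, since $\C$ is Mal'tsev the equivalence relations $R$ and $S$ of which $k$ and $l$ are the zero classes are ``the same as'' Bourn-normal monomorphisms $\langle r_1,r_2\rangle:R\to A\times A$ and $\langle s_1,s_2\rangle:S\to A\times A$ in $\Pt_{\C}(A)$, both with codomain $(A\times A,\pi_1,\langle 1,1\rangle)$. The category $\Pt_{\C}(A)$ is pointed (zero object $(A,1_A,1_A)$), Mal'tsev --- hence strongly unital --- and, by Proposition~\ref{characterization_of_zero_det}, satisfies Condition~\ref{zero_det}. The functor $\Ke_A:\Pt_{\C}(A)\to\C$ preserves finite limits, being a pullback functor, and by Proposition~\ref{characterization_of_zero_det} reflects terminal objects; it sends $R$ and $S$ (as objects of $\Pt_{\C}(A)$ above) to $X$ and $Y$, sends $(A\times A,\pi_1,\langle 1,1\rangle)$ to $A$, and sends $\langle r_1,r_2\rangle$, $\langle s_1,s_2\rangle$ to $k$, $l$. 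As $X\times_AY=0$ and $\Ke_A$ preserves pullbacks and reflects terminal objects, $\langle r_1,r_2\rangle$ and $\langle s_1,s_2\rangle$ have trivial pullback in $\Pt_{\C}(A)$.

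Now apply Proposition~\ref{prop:copoerator_join} inside $\Pt_{\C}(A)$: the cooperator $\Phi$ of $\langle r_1,r_2\rangle$ and $\langle s_1,s_2\rangle$ --- a morphism out of the product $R\times_AS$ of the two points --- is a monomorphism into $(A\times A,\pi_1,\langle 1,1\rangle)$, hence (again by the discussion before the theorem) it exhibits $R\times_AS$ as an equivalence relation $T$ on $A$, namely the join $R\vee S$. Since $\Ke_A$ preserves products and, under the above correspondence, sends an equivalence relation on $A$ to its zero class, we get $\Ke_A(T)\cong\Ke_A(R)\times\Ke_A(S)\cong X\times Y$; chasing the maps into $A$, the zero class of $T$ is a monomorphism $X\times Y\to A$ restricting to $k$ along $\langle 1,0\rangle$ and to $l$ along $\langle 0,1\rangle$, so by uniqueness of the cooperator it coincides with $\varphi$. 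Hence $\varphi$ is, up to isomorphism, the zero class of the equivalence relation $T$, and in particular Bourn-normal, which completes the proof.

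I expect the only real difficulty to be the bookkeeping that makes the third paragraph rigorous: verifying that the identification of equivalence relations on $A$ with the appropriate monomorphisms in $\Pt_{\C}(A)$ is compatible both with $\Ke_A$ and with the formation of zero classes --- in particular that $\Ke_A$ carries $\Phi$ (equivalently, $R\times_AS$ together with the canonical morphisms from $R$ and from $S$) to $\varphi$ (equivalently, $X\times Y$ together with $\langle 1,0\rangle$ and $\langle 0,1\rangle$) --- together with the attendant care about which product projection is taken as the structure map throughout. That $\Pt_{\C}(A)$ is Mal'tsev, and that $\Ke_A$ is a pullback functor, I would invoke as standard facts.
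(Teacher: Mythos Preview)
Your proposal is correct and follows essentially the same route as the paper: lift the problem to $\Pt_{\C}(A)$, use Proposition~\ref{characterization_of_zero_det} to transfer Condition~\ref{zero_det} and the triviality of the meet, apply Proposition~\ref{prop:copoerator_join} there to obtain an equivalence relation on $A$ with underlying object $R\times_A S$, and identify its zero class with~$\varphi$. The paper's proof is much terser and simply asserts the last identification; you spell out why $\Ke_A(\Phi)=\varphi$ and flag the projection/symmetry bookkeeping, which is exactly the detail the paper leaves to the reader.
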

\begin{proof}
By Proposition \ref{characterization_of_zero_det} we see that $X\wedge Y=0$
implies $R\wedge S=0$ when considered as subobjects
of $(A\times A,\pi_1,\langle 1,1\rangle)$ in the category of points
over $A$.
It now follows from 
Proposition \ref{prop:copoerator_join} that
$R\times S$ (in $\Pt(A)$) is a subobject of $(A\times A,\pi_1,\langle 1,1\rangle)$, and
hence is an equivalence relation with zero class the cooperator of
$k$ and $l$.
\end{proof}
\begin{corollary}
Let $\C$ be strongly unital category satisfying Condition \ref{zero_det}.
For an object $X$ in $\C$ the following conditions are equivalent:
\begin{enumerate}[(a)]
\item $X$ is abelian;
\item $\langle 1,1\rangle : X \to X\times X$ is a normal monomorphism;
\item $\langle 1,1\rangle : X\to X\times X$ is a Bourn-normal monomorphisn.
\end{enumerate}
\end{corollary}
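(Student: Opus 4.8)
The plan is to establish the cycle of implications $(a)\Rightarrow(b)\Rightarrow(c)\Rightarrow(a)$. Only the first implication will really use strong unitality; the last is a short application of Corollary \ref{corollary:_meet_trivial_commutes} together with Lemma \ref{lemma:product_decomposes_2}, and the middle one is essentially a definition-chase.

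For $(a)\Rightarrow(b)$ I would invoke the standard fact that in a strongly unital category the abelian objects are exactly the internal abelian group objects (see \cite{BORCEUX_BOURN:2004}), so that an abelian $X$ comes equipped with a difference morphism $d:X\times X\to X$ with $d\langle 1,1\rangle = 0$ and $d\langle 1,0\rangle = 1_X$. The key observation is that the ``change of variables'' morphism $\varphi=\langle\pi_1,d\rangle:X\times X\to X\times X$ is an automorphism — in fact it is its own inverse, since $\varphi\varphi=\langle\pi_1,d\langle\pi_1,d\rangle\rangle$ and the identity $x-(x-y)=y$ holds in any internal abelian group — and that $\varphi\langle 1,0\rangle=\langle 1,1\rangle$ while $\pi_2\varphi=d$. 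Since $\langle 1,0\rangle:X\to X\times X$ is the kernel of $\pi_2$, transporting this kernel along the isomorphism $\varphi$ shows that $\langle 1,1\rangle$ is the kernel of $\pi_2\varphi^{-1}=\pi_2\varphi=d$; in particular it is a normal monomorphism.

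The implication $(b)\Rightarrow(c)$ is immediate from the remark following Diagram \eqref{Def k and l}: the kernel of any morphism is Bourn-normal, its kernel pair serving as the required equivalence relation. For $(c)\Rightarrow(a)$, assume $\langle 1,1\rangle:X\to X\times X$ is Bourn-normal. The morphism $\langle 1,0\rangle:X\to X\times X$, being the kernel of $\pi_2$, is also Bourn-normal, and a direct computation shows that the pullback of $\langle 1,1\rangle$ and $\langle 1,0\rangle$ is $0$. Since $\C$ is strongly unital, hence unital, and satisfies Condition \ref{zero_det}, Corollary \ref{corollary:_meet_trivial_commutes} applies and gives that $\langle 1,1\rangle$ and $\langle 1,0\rangle$ Huq-commute. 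Finally, by Lemma \ref{lemma:product_decomposes_2} — applied with $f=f'=g=1_X$ and $g'=0:X\to X$, and using that $1_X$ and $0$ always Huq-commute (with cooperator $\pi_1$) — this is equivalent to $1_X$ Huq-commuting with $1_X$, that is, to $X$ being abelian.

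I expect the first implication to be the only delicate point: one has to exhibit the diagonal as an honest kernel, and this is exactly where the internal abelian group structure of an abelian object in a strongly unital category, together with the automorphism $\langle\pi_1,d\rangle$, does the work. The remaining steps reduce to a routine pullback computation and direct appeals to results already proved.
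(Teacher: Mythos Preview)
Your argument is correct and follows essentially the same route as the paper: the implications $(a)\Rightarrow(b)\Rightarrow(c)$ are dismissed as immediate there (you spell out $(a)\Rightarrow(b)$ via the subtraction map, which is the standard justification), and for $(c)\Rightarrow(a)$ both proofs observe that $\langle 1,0\rangle$ and $\langle 1,1\rangle$ have trivial pullback and hence Huq-commute by Corollary~\ref{corollary:_meet_trivial_commutes}. The only cosmetic difference is that the paper writes down the cooperator $\psi$ explicitly and reads off $\pi_1\psi$ as a cooperator for $1_X$ with itself (citing \cite{BORCEUX_BOURN:2004}), whereas you obtain the same conclusion directly from Lemma~\ref{lemma:product_decomposes_2}.
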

\begin{proof}
The implications (a) $\Rightarrow$ (b)$\Rightarrow$ (c) are immediate.
It is therefore sufficient to show that (a) follows from (c).
Suppose $X$ is object in $\C$ and $\langle 1,1\rangle$ is Bourn-normal.
Since $\langle 1,0\rangle$ and $\langle 1,1\rangle$ have trivial pullback,
it follows that they commute and hence we obtain a morphismi $\psi$ making
the diagram
\[
\xymatrix{
X
\ar[r]^-{\langle 1,0\rangle}
\ar@{=}[d]
&
X\times X
\ar[d]^{\psi}
&
X
\ar[l]_-{\langle 0,1\rangle}
\ar@{=}[d]
\\
X
\ar[r]^-{\langle 1,0\rangle}
&
X\times X
&
X
\ar[l]_-{\langle 1,1\rangle}
}
\]
commute. The claim now follows from Corollary 1.8.20 of
\cite{BORCEUX_BOURN:2004}, since $\pi_1\psi$ is a cooperator for
$1_X$ and $1_X$.
\end{proof}
\begin{remark}
Given that abelianess is a property in a subtractive category, and
abelianization is obtained by forming the cokernel of the diagonal
in a regular subtractive category (provided the cokernel exists)
\cite{BOURN_JANELIDZE_Z:2016}, one
expects that the above corollary is true in a wider context.
\end{remark}


\begin{thebibliography}{10}

\bibitem{BARR:1971}
M.~Barr, \emph{Exact categories}, in: Lecture Notes in Mathematics
  \textbf{236}, 1--120, 1971.

\bibitem{BORCEUX_BOURN:2004}
F.~Borceux and D.~Bourn, \emph{Mal'cev, protomodular, homological and
  semi-abelian categories}, Kluwer Academic Publishers, 2004.

\bibitem{BOURN:1991}
D.~Bourn, \emph{Normalization equivalence, kernel equivalence and affine
  categories}, Lecture Notes in Mathematics, Category theory ({C}omo, 1990)
  \textbf{1488~}, Springer, Berlin, 43--62, 1991.

\bibitem{BOURN:2000a}
D.~Bourn, \emph{Normal subobjects and abelian objects in protomodular
  categories}, Journal of Algebra \textbf{228}(1), 143--164, 2000.

\bibitem{BOURN_JANELIDZE_Z:2016}
D.~Bourn and Z.~Janelidze, \emph{A note on the abelianization functor},
  Communications in Algebra \textbf{44}(5), 2009--2033, 2016.

\bibitem{CARBONI_LAMBECK_PEDICCHIO:1991}
A.~Carboni, J.~Lambek, and M.~C. Pedicchio, \emph{Diagram chasing in {M}al'ccev
  categories}, Journal of Pure and Applied Algebra \textbf{69}(3), 271--284,
  1991.

\bibitem{CARBONI_PEDICCHIO_PIRAVANO:1992}
A.~Carboni, M.~C. Pedicchio, and N.~Pirovano, \emph{Internal graphs and
  internal groupoids in {M}al'ccev categories}, CMS Conf. Proc., Category
  theory 1991 ({M}ontreal, {PQ}, 1991) \textbf{13~}, Amer. Math. Soc.,
  Providence, RI, 97--109, 1992.

\bibitem{GRAN_JANELIDZE:2014}
M.~Gran and Z.~Janelidze, \emph{Star-regularity and regular completions},
  Journal of Pure and Applied Algebra \textbf{218}(10), 1771--1782, 2014.

\bibitem{GUMM_URSINI:1984}
H.~P. Gumm and A.~Ursini, \emph{Ideals in universal algebras}, Algebra
  Universalis \textbf{19}(1), 45--54, 1984. 

\bibitem{HUQ:1968}
S.~A. Huq, \emph{Commutator, nilpotency and solvability in categories},
  Quarterly Journal of Mathematics \textbf{19}(1), 363--389, 1968.

\bibitem{JANELIDZE_MARKI_THOLEN_URSINI:2010}
G.~Janelidze, L.~M\'arki, W.~Tholen, and A.~Ursini, \emph{Ideal determined
  categories}, Cahiers de topologie et geom\'etrie diff\'erentielle
  cat\'egoriques \textbf{51}, 115--127, 2010.

\bibitem{JANELIDZE_Z:2010}
Z.~Janelidze, \emph{The pointed subobject functor, 3 x 3 lemmas, and
  subtractivity of spans}, Theory and Applications of Categories
  \textbf{23}(11), 221--242, 2010.

\end{thebibliography}
\end{document}